\newcommand{\ol}{\mathcal{O}}
\newcommand{\nb}{\mathcal{N}}
\def \a{\alpha}
\def \gf{\mathfrak{g}}
\def \o{\omega}
\def \phi{\varphi}
\def \Phi{\varPhi}
\def \r{\rho}
\def \s{\sigma}
\def \C{\mathbb{C}\,}
\def\widecheckg{g^{\hspace*{-2.5pt}\vbox to 5pt{\hbox to
0pt{\LARGE$\check{}$}}}\hspace*{2pt}}
\def\widecheckl{\lambda^{\hspace*{-3.5pt}\vbox to 8pt{\hbox to
0pt{\LARGE$\check{}$}}}\hspace*{2pt}}
\begin{document}

\title{On the infinitesimal automorphisms\\ 
of principal bundles}
\author{Radu Pantilie}  
\email{\href{mailto:radu.pantilie@imar.ro}{radu.pantilie@imar.ro}}
\address{R.~Pantilie, Institutul de Matematic\u a ``Simion~Stoilow'' al Academiei Rom\^ane,
C.P. 1-764, 014700, Bucure\c sti, Rom\^ania}
\subjclass[2010]{32M05, 32L05, 53C29}
\keywords{infinitesimal automorphisms of principal bundles}

\newtheorem{thm}{Theorem}[section]
\newtheorem{lem}[thm]{Lemma}
\newtheorem{cor}[thm]{Corollary}
\newtheorem{prop}[thm]{Proposition}

\theoremstyle{definition}

\newtheorem{defn}[thm]{Definition}
\newtheorem{rem}[thm]{Remark}
\newtheorem{exm}[thm]{Example}

\numberwithin{equation}{section} 

\maketitle
\thispagestyle{empty}
\vspace{-4mm} 
\begin{center}
\emph{This paper is dedicated to the memory of my father - Nicolae Pantilie (1932 - 2017).}
\end{center}
\vspace{1mm} 
\begin{abstract}
We review some basic facts on vector fields, in the complex-analytic setting, thus, obtaining a rationality result and an 
extension of the Birkhoff-Grothendieck theorem, as follows:\\ 
$\bullet$ Let $Z$ be a compact complex manifold endowed with a very ample line bundle $L$\,. Denote by $\gf_L$ the extended Lie algebra 
of infinitesimal automorphisms of $L$\,. If the representation of $\gf_L$ on the space of holomorphic sections of $L$ is irreducible then 
$Z$ is rational.\\ 
$\bullet$ Let $P$ be a holomorphic principal bundle over the Riemann sphere, with structural group $G$ whose Lie algebra 
is not equal to its nilpotent radical. Then there exists a Lie subgroup $H$ of $G$ with the following properties:\\ 
\indent 
{\rm (i)} $H$ is a quotient of a Borel subgroup of ${\rm SL}(2)$\,.\\ 
\indent 
{\rm (ii)} $P$ admits a reduction to $H$. 
\end{abstract}

\section*{Introduction}

\indent 
The (complex) Lie algebras appear, mainly, through their representations but, also, as acting (infinitesimally, through holomorphic vector fields) 
on compact complex manifolds. The two occurrences overlap in the setting provided by compact complex manifolds $Z$ endowed with 
a very ample line bundle $L$\,. Indeed, some of the holomorphic vector fields on $Z$ lift to (extended) infinitesimal automorphisms on $L$ 
and therefore are induced by linear endomorphisms of the space of holomorphic sections of $L$\,. The problem of which holomorphic vector fields 
are obtained this way is, essentially, the same with the existence problem for certain holomorphic partial connections on $L$ \cite{Lieb-81} 
(cf.\ \cite{At-57}\,; see, also, Section \ref{section:Lie_alg_proj}\,, below). It follows (see Theorem \ref{thm:Borel_with_converse}\,) that  
these vector fields are characterised by the fact that their zero set is nonempty, a (classical) result obviously related to the Borel fixed point theorem.\\ 
\indent 
It, also, follows that if the extended Lie algebra $\gf_L$ of infinitesimal automorphisms of $L$ is nontrivial then $Z$ is covered by rational curves 
\cite{Lieb-75} (see Corollary \ref{cor:rational_curves}\,), result which provides a glimpse of the importance of the rational curves in geometry 
(see \cite{Paltin-2005} and the references therein, for more on this). From the differential geometric point of view, we could deduce that if $\gf_L$ 
is not solvable then a Zariski open subset of $Z$ is the twistor space of a $\r$-quaternionic manifold \cite{Pan-qgfs}\,.\\  
\indent 
The study of $\gf_L$ is, also, motivated by the fact that some of the 
basic open problems in geometry are related to the following simple question: \emph{to what extent $Z$ is determined by $\gf_L$\,}? (see, also, 
\cite{HwaMok-prolong} for other results on $\gf_L$\,). After a review, in Section \ref{section:nvp}\,, of nilpotent vector fields on the complex 
projective space, we show (Theorem \ref{thm:rationality}\,) that if the representation of $\gf_L$ on the space of holomorphic sections of $L$ 
is irreducible then $Z$ is rational.\\ 
\indent 
In Section \ref{section:BirGro}\,, we continue the study of extended Lie algebras of infinitesimal automorphisms of holomorphic principal bundles 
by taking $Z$ to be the Riemann sphere but by allowing more general structural groups. This leads to a natural extension (Corollary \ref{cor:BirGro_extended}\,) 
of the Birkhoff--Grothendieck theorem through a natural approach which, also, provides a simple way of determining the normal bundles  
of the Veronese curves (Example \ref{exm:Veronese_normal_bundle}(2)\,).

\section{Nilpotent vector fields on the projective space} \label{section:nvp} 

\indent 
Unless otherwise stated, all the manifolds and maps are assumed complex analytic. If $Z$ is a manifold, we denote by $\ol_Z$ its sheaf of functions 
(which can be seen as the sheaf of sections of the trivial line bundle over $Z$). Further, if ${\rm Pic}\,Z=\mathbb{Z}$ then we denote 
by $\ol_Z(n)$\,, or just $\ol(n)$ if there is no danger of confusion, the (isomorphism class of) line bundle(s) corresponding to $n\in\mathbb{Z}$\,.\\  
\indent 
In this section, we review some (known) facts which will be used later on.\\ 
\indent 
Firstly, let $E$ be a vector space and let $U\subseteq E$ be a vector subspace. Then $PE\setminus PU$ is a tubular neighbourhood 
for any complex projective subspace of $PE$ disjoint from $PU$ and of maximal dimension. Indeed, let $V\subseteq E$ be a vector subspace 
such that $E=U\oplus V$. Then we have a diffeomorphism $PE\setminus PU\to U\otimes\ol_{PV}(1)$\,, intertwining  the projections 
onto $PV$, which to any $[u+v]\in PE$ with $u\in U$ and $v\in V\setminus\{0\}$ associates $u\otimes\a_v$\,, 
where $\a_v\in\bigl(\C\!v\bigr)^*$ is such that $\a(v)=1$\,. It is straightforward to check that this way we have defined a diffeomorphism, as claimed. 
For example, its inverse associates to any $u\otimes\a\in U\otimes\ol_{PV}(1)$\,, over $[v]\in PV$, the point $[\a(v)u+v]\in PE\setminus PU$.\\ 
\indent 
Note that, through this diffeomorphism, $PV$ corresponds to the zero section of $U\otimes\ol_{PV}(1)$\,. Therefore there is no canonical 
(that is, independent of $PV$) diffeomorphism from $PE\setminus PU$ onto $U\otimes\ol_{P(E/U)}(1)$ inducing the above mentioned one. Furthermore, 
any linear Hermitian structure on $E$ such that $U^{\perp}=V$ induces a linear Hermitian structure on $U\otimes\ol_{PV}(1)$ 
and the corresponding bundle of unit open balls, also, corresponds to a tubular neighbourhood of $PV$ into $PE$\,. Moreover, the boundary  
of its closure (essentially, the unit sphere bundle of $U\otimes\ol_{PV}(1)$\,) is a smooth bundle whose space of sections 
can be identified with the subspace $S$ of the unit sphere of $U\otimes V$ 
formed of the decomposable vectors; that is, of the form $u\otimes v$ with $u\in U$ and $v\in V$. Similarly, $S$ is also the sphere bundle 
of $\ol_{PU}(1)\otimes V$ and the projection $S\to PU$ is induced by $E\to U$.\\ 
\indent 
Consequently, we have the following fact. 

\begin{prop} \label{prop:proj_cell_decomp} 
$PE$ is homeomorphic to the bundle of closed unit balls of $U\otimes\ol_{PV}(1)$ attached to $PU$ through the projection from $S$ onto $PU$. 
\end{prop}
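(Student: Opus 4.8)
The plan is to fix a Hermitian metric, realise $PE$ as the union of two closed disk bundles glued along the hypersurface $S$ — one being the bundle of closed unit balls of $U\otimes\ol_{PV}(1)$, the other a disk bundle over $PU$ — and then use the mapping-cylinder structure of the second one to collapse it onto $PU$.

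First I would fix a linear Hermitian structure on $E$ with $U^{\perp}=V$ and, for $w\in E\setminus\{0\}$, write $w=u+v$ with $u\in U$ and $v\in V$. By the discussion above, under the diffeomorphism $PE\setminus PU\to U\otimes\ol_{PV}(1)$ the image of $[u+v]$ has induced norm $|u|/|v|$; hence this diffeomorphism carries the closed region $A:=\{[u+v]\in PE:|u|\le|v|\}$ onto the bundle $\overline{B}$ of closed unit balls of $U\otimes\ol_{PV}(1)$, with $PV$ going to the zero section and the hypersurface $S=\{[u+v]\in PE:|u|=|v|\}$ to $\partial\overline{B}$, the unit sphere bundle. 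Applying the same construction with the roles of $U$ and $V$ interchanged, the complementary closed region $A':=\{[u+v]\in PE:|u|\ge|v|\}$ is identified with the bundle of closed unit balls of $V\otimes\ol_{PU}(1)\cong\ol_{PU}(1)\otimes V$ over $PU$, with $\partial A'=S$ its unit sphere bundle and the corresponding projection $p\colon S\to PU$ the one induced by $E\to U$ — in agreement with the identifications already recorded before the statement. Since $\{|u|<|v|\}$, $\{|u|=|v|\}$ and $\{|u|>|v|\}$ partition $PE$, we obtain $PE=A\cup A'$ with $A\cap A'=S$.

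Then I would invoke two standard facts. The bundle of closed unit balls of a Hermitian vector bundle over a compact base is homeomorphic to the mapping cylinder of its unit sphere bundle projection (over the complement of the zero section, radial rescaling gives a homeomorphism onto the product of the sphere bundle with $(0,1]$); thus $A'$ is the mapping cylinder of $p$, its free end being $S=\partial A'$. And gluing the mapping cylinder of a map $f\colon S\to Y$ to a compact manifold-with-boundary $A$ along an identification of its free end with $\partial A$ produces a space homeomorphic to the adjunction space obtained by attaching $Y$ to $A$ through $f$; this follows by splicing the cylinder into a collar of $\partial A$ in $A$ and reparametrising. Applied with $A=\overline{B}$, $Y=PU$ and $f=p$, these give $PE=A\cup_S A'\cong\overline{B}\cup_p PU$, which is exactly the assertion that $PE$ is homeomorphic to the bundle $\overline{B}$ of closed unit balls of $U\otimes\ol_{PV}(1)$ attached to $PU$ through the projection $p$.

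The step deserving care — rather than the formal assembly, which is routine given the preceding discussion — is the mapping-cylinder/collar argument: one cannot simply collapse the disk bundle $A'$ fibrewise onto $PU$, since that map crushes the interior disks of $A'$ and fails to be injective. Beyond that, the only genuine verifications are that the hypersurface $S$ and the projection $p$, produced from the two (dual) descriptions of $S$, are compatible with the two regions $A$ and $A'$ — and this is precisely what was arranged in the preceding paragraph.
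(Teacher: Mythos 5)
Your argument is correct and follows essentially the same route as the paper, which states the proposition as a direct consequence of the preceding discussion: the two complementary closed ball bundles (of $U\otimes\ol_{PV}(1)$ over $PV$ and of $\ol_{PU}(1)\otimes V$ over $PU$) covering $PE$ and glued along the common sphere bundle $S$, with $S\to PU$ induced by $E\to U$. The mapping-cylinder/collar step you add is exactly the routine point-set detail the paper leaves implicit, and you correctly flag why the naive fibrewise collapse of the second ball bundle would not do.
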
 

\indent 
We shall, also, need the following two simple lemmas. 

\begin{lem} \label{lem:for_complementary_flags} 
Let $U,V,W\subseteq E$ be vector subspaces such that $V\subseteq W$.\\ 
\indent 
Then $E=U\oplus V$ if and only if $E=U+W$ and $W=(U\cap W)\oplus V$. 
\end{lem}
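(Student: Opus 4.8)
The statement is an elementary linear-algebra equivalence, so the plan is to verify both implications directly by chasing elements and dimensions, using the hypothesis $V\subseteq W$ throughout.

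First I would prove the forward implication. Assume $E=U\oplus V$. Since $V\subseteq W\subseteq E=U+V$, we certainly have $E=U+W$. For the second assertion, note that $U\cap W$ and $V$ are both contained in $W$, so their sum lies in $W$; conversely, given $w\in W$, write $w=u+v$ with $u\in U$, $v\in V$ (possible since $E=U+V$), and observe that $u=w-v\in W$ because $V\subseteq W$, hence $u\in U\cap W$ and $w\in(U\cap W)+V$. Thus $W=(U\cap W)+V$. The sum is direct because $(U\cap W)\cap V\subseteq U\cap V=\{0\}$, the last equality coming from $E=U\oplus V$.

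Next I would prove the converse. Assume $E=U+W$ and $W=(U\cap W)\oplus V$. Substituting the second equation into the first gives $E=U+\bigl((U\cap W)+V\bigr)=U+V$, since $U\cap W\subseteq U$. It remains to see the sum $U+V$ is direct, i.e.\ $U\cap V=\{0\}$. But $V\subseteq W$, so $U\cap V\subseteq U\cap W$, and therefore $U\cap V\subseteq(U\cap W)\cap V=\{0\}$ by the directness of the decomposition $W=(U\cap W)\oplus V$. Hence $E=U\oplus V$.

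There is no real obstacle here; the only point requiring a little care is to use the hypothesis $V\subseteq W$ in exactly the right places — once to conclude $w-v\in W$ in the forward direction, and once to conclude $U\cap V\subseteq U\cap W$ in the converse — and to keep track of which inclusions are needed for the sums to be \emph{direct} as opposed to merely spanning. No dimension count or finiteness assumption is needed, so the argument applies to infinite-dimensional $E$ as well.
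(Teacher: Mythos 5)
Your proof is correct, but it takes a different route from the paper's. The paper argues by dimension counting: in the forward direction it deduces from $E=U+W$ that the codimension of $U\cap W$ in $W$ equals the codimension of $U$ in $E$, which equals $\dim V$, and then concludes $W=(U\cap W)\oplus V$ from the inclusion $(U\cap W)\oplus V\subseteq W$ by comparing dimensions; the converse is handled by the same codimension identity run backwards. You instead chase elements: in the forward direction you decompose an arbitrary $w\in W$ as $u+v$ and observe that $u=w-v\in W$ because $V\subseteq W$, and in the converse you substitute $W=(U\cap W)+V$ into $E=U+W$ and check $U\cap V\subseteq(U\cap W)\cap V=\{0\}$. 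Your argument is the more elementary of the two and, as you note, needs no finiteness hypothesis, so it is valid for infinite-dimensional $E$ as well; the paper's version is shorter to write but leans on the finite-dimensional setting in which the lemma is actually applied. Both are complete and correct, and you place the hypothesis $V\subseteq W$ exactly where it is needed.
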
 
\begin{proof} 
If $E=U\oplus V$ then, obviously, $E=U+W$. Hence, the codimension of $U$ in $E$ is equal to the codimension of $U\cap W$ in $W$\,.  
As the former codimension is equal to the dimension of $V$, the spaces $(U\cap W)\oplus V$ and $W$ have the same dimension. 
Together with $(U\cap W)\oplus V\subseteq W$, this proves that $W=(U\cap W)\oplus V$.\\ 
\indent 
Conversely, suppose that $E=U+W$ and $W=(U\cap W)\oplus V$. Because $V\subseteq W$, the latter equality implies that $U\cap V=\{0\}$\,, 
and the fact that the codimension of $U\cap W$ in $W$ is equal to the dimension of $V$. 
But the codimension of $U\cap W$ in $W$ is, also, equal to the codimension of $U$ in $E$. Consequently, $E=U\oplus V$. 
\end{proof} 

\begin{lem} \label{lem:complementary_flags} 
Let $U_1\subseteq\cdots\subseteq U_k$ and $V_1\supseteq\cdots\supseteq V_k$ be \emph{complementary} flags on $E$\,; 
that is, $U_j\oplus V_j=E$\,, for $j=1,\ldots,k$\,, where $k\in\mathbb{N}\setminus\{0\}$\,.\\ 
\indent  
Then $U_j=U_{j-1}\oplus(U_j\cap V_{j-1})$\,, for any $j=2,\ldots,k$\,. 
\end{lem}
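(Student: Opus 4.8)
The plan is to apply Lemma \ref{lem:for_complementary_flags} repeatedly, once for each relevant index. The point is that the three-space statement in that lemma is exactly the two-step version of what we want, so the multi-step conclusion should follow by a direct bookkeeping of inclusions.

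Concretely, fix $j\in\{2,\ldots,k\}$ and set $U:=U_{j-1}$, $V:=V_{j-1}$, $W:=U_j$. I must check the hypothesis $V\subseteq W$ of Lemma \ref{lem:for_complementary_flags}, i.e. $V_{j-1}\subseteq U_j$. This is where the \emph{complementary flags} assumption enters nontrivially: since $U_{j-1}\subseteq U_j$ and $U_{j-1}\oplus V_{j-1}=E$, every element of $E$ — in particular every element of $V_{j-1}$ — is a sum of something in $U_{j-1}\subseteq U_j$ and something in $V_{j-1}$; but that does not immediately give $V_{j-1}\subseteq U_j$. Instead I would argue by dimension: from $U_j\oplus V_j=E$ and $U_{j-1}\oplus V_{j-1}=E$ we get $\dim U_{j-1}\le \dim U_j$, hence $\dim V_{j-1}\ge\dim V_j$, consistent with $V_{j-1}\supseteq V_j$; and I need to see that the ``extra'' part of $U_j$ over $U_{j-1}$ is pinned down inside $V_{j-1}$. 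Actually the cleanest route is: $U_{j-1}\oplus V_{j-1}=E$ means $V_{j-1}$ is a complement of $U_{j-1}$; since $U_{j-1}\subseteq U_j$, intersecting with $U_j$ and using the modular law gives $U_j=U_j\cap E=U_j\cap(U_{j-1}\oplus V_{j-1})=U_{j-1}\oplus(U_j\cap V_{j-1})$ — which is already the desired identity, with no need to invoke $V_{j-1}\subseteq U_j$ at all.

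So in fact the proof reduces to a single application of the modular (Dedekind) law for subspaces: if $A\subseteq C$ then $C\cap(A+B)=A+(C\cap B)$. With $A=U_{j-1}$, $B=V_{j-1}$, $C=U_j$, the inclusion $A\subseteq C$ is the flag condition $U_{j-1}\subseteq U_j$, and $A+B=E$ is the complementarity $U_{j-1}\oplus V_{j-1}=E$, so the left-hand side is $U_j\cap E=U_j$. The sum $U_{j-1}+(U_j\cap V_{j-1})$ is moreover direct because $U_{j-1}\cap V_{j-1}=\{0\}$. Alternatively, to stay strictly within the tools already proved in the excerpt, one applies Lemma \ref{lem:for_complementary_flags} with the roles $U\rightsquigarrow U_{j-1}$, $V\rightsquigarrow U_j\cap V_{j-1}$, $W\rightsquigarrow U_j$: the hypothesis $V\subseteq W$ is then trivially $U_j\cap V_{j-1}\subseteq U_j$, the hypothesis $E=U+W$ is $E=U_{j-1}+U_j$ which holds since $U_{j-1}+V_{j-1}=E$ and $U_{j-1}\subseteq U_j$, and the hypothesis $W=(U\cap W)\oplus V$ reads $U_j=(U_{j-1}\cap U_j)\oplus(U_j\cap V_{j-1})=U_{j-1}\oplus(U_j\cap V_{j-1})$, which is again the modular-law identity; the lemma then yields $E=U_{j-1}\oplus(U_j\cap V_{j-1})$, but we actually want the reverse direction, so the direct modular-law argument is the honest one here.

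The only real obstacle is therefore deciding which auxiliary fact to lean on: the modular law is the natural engine, and whether the author wants it proved inline or cited. I would state the modular law (one line: pick $x\in C\cap(A+B)$, write $x=a+b$ with $a\in A\subseteq C$, so $b=x-a\in C\cap B$), then apply it with the substitution above, and finish by noting directness from $U_{j-1}\cap V_{j-1}\subseteq U_{j-1}\cap V_{j-1}=\{0\}$ — the last equality because $U_{j-1}\oplus V_{j-1}=E$ is a direct sum. No induction on $j$ is needed since each $j$ is handled independently; the hypotheses for indices other than $j-1$ and $j$ are never used, which is worth a remark.
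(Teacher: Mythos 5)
Your final argument is correct, but it is not the route the paper takes, and the substitution you were hunting for in Lemma \ref{lem:for_complementary_flags} does exist --- you just tried the wrong assignment. The paper's entire proof is: apply Lemma \ref{lem:for_complementary_flags} with $U=V_{j-1}$, $V=U_{j-1}$, $W=U_j$ (i.e.\ the roles of $U$ and $V$ swapped relative to your first attempt). Then the hypothesis $V\subseteq W$ reads $U_{j-1}\subseteq U_j$, which is just the flag condition; $E=U\oplus V$ reads $E=V_{j-1}\oplus U_{j-1}$, which is the complementarity at level $j-1$; and the forward implication of the lemma delivers $W=(U\cap W)\oplus V$, i.e.\ $U_j=(V_{j-1}\cap U_j)\oplus U_{j-1}$, which is exactly the claim. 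Your modular-law computation $U_j=U_j\cap(U_{j-1}\oplus V_{j-1})=U_{j-1}+(U_j\cap V_{j-1})$, with directness from $U_{j-1}\cap V_{j-1}=\{0\}$, is a perfectly sound and self-contained alternative --- indeed the forward direction of Lemma \ref{lem:for_complementary_flags} is essentially a disguised instance of the same Dedekind identity, so the two proofs differ only in packaging: the paper reuses its already-proved lemma in one line, while you re-derive the underlying subspace identity from scratch. Your observation that only the complementarity at level $j-1$ and the inclusion $U_{j-1}\subseteq U_j$ are ever used (so no induction, and the hypothesis $U_j\oplus V_j=E$ is not needed for the $j$-th conclusion) is accurate and applies equally to the paper's proof.
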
 
\begin{proof} 
Apply Lemma \ref{lem:for_complementary_flags} with $U=V_{j-1}$\,, $V=U_{j-1}$ and $W=U_j$\,, for $j=2,\ldots,k$\,. 
\end{proof} 

\indent 
Note that, any pair of complementary complete flags of $E$ induces, through Proposition \ref{prop:proj_cell_decomp} 
and Lemma \ref{lem:complementary_flags}\,, the usual cellular decomposition of $PE$\,.\\ 
\indent 
If $A$ is a linear endomorphism on $E$ we shall, also, denote by $A$ the vector field induced on $PE$\,. 
If $A$ is nilpotent then the closure of any of its nontrivial orbits on $PE$ is a rational curve; that is, a nonconstant map from the projective line to $PE$\,.  
More precisely, suppose that $A$ is nilpotent of degree $k+1$\,, where $k\in\mathbb{N}$\,. Then we have two increasing filtrations 
\begin{equation*} 
\begin{split}  
&\{0\}\subseteq{\rm ker}A\subseteq\cdots\subseteq{\rm ker}\bigl(A^k\bigr)\subseteq E\;,\\ 
&\{0\}\subseteq{\rm im}\bigl(A^k\bigr)\subseteq\cdots\subseteq{\rm im}A\subseteq E 
\end{split} 
\end{equation*} 
\noindent 
(the dimensions of each of which determine $A$\,, up to conjugations).  
Thus, if $u\in{\rm ker}\bigl(A^{j+1}\bigr)\setminus{\ker}\bigl(A^j\bigr)$\,, for some $j=1,\ldots,k$\,, 
then $u\,, Au\,,\ldots,A^ju$ are linearly independent, and, hence, the closure of the orbit of $A$ through $[u]\in PE$ 
is a (smooth) normal rational curve of degree $j$\,.   

\begin{prop} \label{prop:orbits_of_nilpotents} 
Let $A$ be a nilpotent endomorphism on the vector space $E$\,, and $k\in\mathbb{N}$\,.\\ 
\indent  
{\rm (i)} The nilpotency degree of $A$ is $k+1$ if and only if the closure of the generic orbit of $A$ on $PE$ is a rational curve of degree $k$\,.\\ 
\indent 
{\rm (ii)} If the nilpotency degree of $A$ is equal to $k+1$\,, the inclusion of $PE\setminus P\bigl({\rm ker}\bigl(A^k\bigr)\bigr)$ into $PE$ 
is homotopic to the map induced by $A^k$. 
\end{prop}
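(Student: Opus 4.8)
The plan is to exploit the explicit tubular-neighbourhood picture set up before Proposition \ref{prop:proj_cell_decomp} together with the fact, recorded just above the statement, that on the orbit of $A$ through $[u]$ with $u\in\ker(A^{k+1})\setminus\ker(A^k)$ the vectors $u,Au,\dots,A^ku$ are linearly independent and span the projective line that is the orbit closure. For part (i), one direction is immediate: if the nilpotency degree is $k+1$ then any $u\notin\ker(A^k)$ (a generic point) gives an orbit closure which is a normal rational curve of degree exactly $k$, as already observed. Conversely, if the generic orbit closure has degree $k$, then for generic $u$ the iterates $u,Au,\dots,A^ku$ are independent while $A^{k+1}u$ lies in their span; a dimension count on the filtration $\{0\}\subseteq\ker A\subseteq\cdots\subseteq\ker(A^k)\subseteq E$ (using that a generic $u$ avoids $\ker(A^k)$ but every vector lies in $\ker(A^{k+1})$ when the degree is $\le k+1$, and that degree $<k+1$ would force the generic orbit to have degree $<k$) pins the nilpotency degree to $k+1$.

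For part (ii), write $K=\ker(A^k)$ and $I=\operatorname{im}(A^k)$. Since $A^{k+1}=0$ we have $I\subseteq\ker A\subseteq K$, and $A^k$ factors as $E\to E/K\xrightarrow{\;\sim\;}I\hookrightarrow E$, the middle arrow being a linear isomorphism. The map induced by $A^k$ on projective spaces is therefore defined precisely on $PE\setminus PK$ and has image $PI$. The idea is to choose a vector-space complement $V$ of $K$ in $E$, so that $PE\setminus PK$ is diffeomorphic (by the diffeomorphism constructed in the opening paragraph of the section) to $K\otimes\ol_{PV}(1)$, with $PV$ corresponding to the zero section; the linear inclusion $PE\setminus PK\hookrightarrow PE$ is then, up to this identification, the total-space inclusion, which deformation-retracts onto the zero section $PV$. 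On the other hand $PV\cong P(E/K)$, and under this identification the restriction of $A^k$ to $PV$ is exactly the projective-linear isomorphism $P(E/K)\xrightarrow{\sim}PI$ induced by the middle arrow above. So I would build the homotopy in two stages: first the linear retraction of $K\otimes\ol_{PV}(1)$ onto $PV$ along the fibres (this is already a homotopy, inside $PE$, from the inclusion to the composite $PE\setminus PK\to PV\hookrightarrow PE$), and then observe that this composite agrees with the map induced by $A^k$ followed by the projective-linear isomorphism $PI\xleftarrow{\sim}PV$; since $PGL$ of a space is connected, that isomorphism is homotopic to the identity of $PI\subseteq PE$, giving the second stage.

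The main obstacle I anticipate is bookkeeping rather than conceptual: one must check that the fibrewise linear retraction of the tubular neighbourhood is genuinely realised inside $PE$ by the explicit formula $[\a(v)u+v]$ (scaling $u$ toward $0$ keeps one inside $PE\setminus PK$ and lands on $[v]\in PV$ in the limit), and that after this retraction the resulting map $PE\setminus PK\to PE$ coincides on the nose with $[w]\mapsto[\text{class of }w\text{ in }E/K]$, which is the same as $A^k$ composed with the fixed linear isomorphism $E/K\xrightarrow{\sim}I$. Once these identifications are in place, connectedness of the relevant projective-linear group finishes the homotopy, and no further analysis is needed.
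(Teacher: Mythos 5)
Your proposal is correct, but for part (ii) it follows a genuinely different route from the paper's. The paper's proof is a one-stage argument using the flow of $A$ itself: the rational map from $\C\!P^1\times PE$ to $PE$ given by $(z,[u])\mapsto[\exp(zA)u]=[u+zAu+\cdots+\tfrac{z^k}{k!}A^ku]$ restricts to an honest map $\phi$ on $\C\!P^1\times\bigl(PE\setminus P\bigl({\rm ker}(A^k)\bigr)\bigr)$, with $\phi_0$ the inclusion and $\phi_\infty$ the map induced by $A^k$ (projectively, the top-degree term dominates as $z\to\infty$); a path from $0$ to $\infty$ in $\C\!P^1$ is then the entire homotopy, with no auxiliary choices. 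Your two-stage homotopy --- the fibrewise retraction of the tubular neighbourhood $K\otimes\ol_{PV}(1)$ of $PV$ onto its zero section, where $K={\rm ker}(A^k)$ and $V$ is a chosen complement, followed by a comparison of the two linear embeddings of $PV$ into $PE$ --- is more elementary and puts the cell-decomposition picture of Proposition \ref{prop:proj_cell_decomp} to work, at the cost of the choice of $V$ and of the extra final step. On that final step your phrasing is slightly off: the projective-linear isomorphism $PV\to PI$, with $I={\rm im}(A^k)$, is a map between two \emph{different} linear subspaces of $PE$, so it is not literally ``homotopic to the identity of $PI$''; what you actually need is that the inclusion $PV\hookrightarrow PE$ and the composite $PV\xrightarrow{\sim}PI\hookrightarrow PE$ are homotopic as maps into $PE$, which holds because $A^k|_V$ extends to an automorphism of $E$ and ${\rm GL}(E)$ is connected (equivalently, the space of injective linear maps $V\to E$ is connected over $\C$). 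With that correction both stages are sound; part (i) you treat essentially as the paper does, which simply declares it obvious.
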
 
\begin{proof} 
The first assertion is obvious. To prove (ii)\,, note that, the flow of $A$ on $PE$ defines a rational map from $\C\!P^1\times PE$ to $PE$ 
which is, also, represented by a map $\phi:\C\!P^1\times\bigl(PE\setminus P\bigl({\rm ker}\bigl(A^k\bigr)\bigr)\bigr)\to PE$\,.\\ 
\indent 
For each $z\in\C\!P^1\bigl(=\C\sqcup\{\infty\}\bigr)$\,, let $\phi_z:PE\setminus P\bigl({\rm ker}\bigl(A^k\bigr)\bigr)\to PE$  
be given by $\phi_z(x)=\phi(z,x)$\,, for any $x\in PE\setminus P\bigl({\rm ker}\bigl(A^k\bigr)\bigr)$\,. 
Then $\phi_0$ is just the inclusion of the complement of $P\bigl({\rm ker}\bigl(A^k\bigr)\bigr)$ into $PE$\,, 
whilst $\phi_{\infty}$ is given by the endomorphism $A^k$. 
\end{proof} 

\indent 
We end this section with the following fact. 

\begin{prop} \label{prop:nilpotent_complementary_flags} 
Let $A$ and $B$ be nilpotent endomorphisms of degree $k+1$ on $E$ generating a Lie algebra isomorphic to $\mathfrak{sl}(2)$\,, 
where $k\in\mathbb{N}$\,.\\ 
\indent  
Then ${\rm ker}\bigl(A^k\bigr)$ and ${\rm im}\bigl(B^k\bigr)$ are complementary and the corresponding projection from $E$ onto the latter is given, 
up to a nonzero factor, by $B^k\!A^k$.\\ 
\indent  
Moreover, $U_j={\rm ker}\bigl(A^j\bigr)$\,, $V_j={\rm im}\bigl(B^j\bigr)$\,, $j=1,\ldots,k$\,, define complementary flags  
preserved by $[A,B]$\,. 
\end{prop}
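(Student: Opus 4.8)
The plan is to recognise the hypothesis as nothing but the statement that $E$ is a finite-dimensional $\mathfrak{sl}(2)$-module, and then to read off everything from the classification of such modules. First I would normalise the generators to a standard triple: every nonzero nilpotent element of $\mathfrak{sl}(2)$ is conjugate to the standard $e$, and $A,B$ cannot be proportional (otherwise the Lie algebra they generate would be abelian), so a short computation in ${\rm SL}(2)$ — conjugate $A$ to $e$, then use the centraliser of $e$ to bring $B$ to a multiple of the standard $f$ — produces $\beta\in\C$, $\beta\neq0$, such that $\bigl(A,\,\beta^{-1}[A,B],\,\beta^{-1}B\bigr)$ is a standard $\mathfrak{sl}(2)$-triple. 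In particular $[A,B]=\beta H$ with $H:=\beta^{-1}[A,B]$ acting diagonalisably on $E$ with integer eigenvalues, and $E$ becomes an $\mathfrak{sl}(2)$-module on which $e$ acts as $A$ and $f$ as $\beta^{-1}B$.

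By Weyl's complete reducibility and the classification of the irreducibles, $E=\bigoplus_\alpha V_{m_\alpha}$, with $V_m$ the irreducible of dimension $m+1$. Since $A$ ($=e$) acts on $V_m$ as a single Jordan block of size $m+1$, the assumption that $A$ (equivalently $B$) is nilpotent of degree exactly $k+1$ is precisely the statement $\max_\alpha m_\alpha=k$; in particular ${\rm ker}\bigl(A^k\bigr)\neq E$ and ${\rm im}\bigl(B^k\bigr)\neq\{0\}$. Now fix on each summand $V_m$ a weight basis $v_0,\dots,v_m$ (the nonzero $H$-eigenvectors, $v_i$ a nonzero multiple of $f^iv_0$ with $v_0$ of highest weight). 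Then $A^j=e^j$ sends $v_i$ to a nonzero multiple of $v_{i-j}$ for $i\ge j$ and kills the rest, while $B^j=(\beta f)^j$ sends $v_i$ to a nonzero multiple of $v_{i+j}$ for $i\le m-j$ and kills the rest, whence
\[
{\rm ker}\bigl(A^j\bigr)\cap V_m={\rm span}\bigl(v_0,\dots,v_{\min(j,m+1)-1}\bigr),\qquad
{\rm im}\bigl(B^j\bigr)\cap V_m={\rm span}\bigl(v_j,\dots,v_m\bigr),
\]
the latter being $\{0\}$ when $j>m$. These two subspaces of $V_m$ are complementary — their dimensions add up to $m+1$ and their intersection is $\{0\}$ — and each is a sum of $H$-weight spaces, hence $H$-stable, hence $[A,B]$-stable. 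Summing over $\alpha$, ${\rm ker}\bigl(A^j\bigr)$ and ${\rm im}\bigl(B^j\bigr)$ are complementary and $[A,B]$-invariant for every $j\ge1$, in particular for $j=1,\dots,k$; together with the evident inclusions ${\rm ker}\bigl(A^j\bigr)\subseteq{\rm ker}\bigl(A^{j+1}\bigr)$ and ${\rm im}\bigl(B^{j+1}\bigr)\subseteq{\rm im}\bigl(B^j\bigr)$ this gives the complementary flags preserved by $[A,B]$. Finally, $B^k\!A^k=\beta^kf^ke^k$ annihilates each $V_\alpha$ with $m_\alpha<k$ (there $e^k=0$), annihilates ${\rm ker}\bigl(A^k\bigr)\cap V_\alpha$ for $m_\alpha=k$, and multiplies the one-dimensional lowest weight line of each $V_\alpha$ with $m_\alpha=k$ by one and the same nonzero scalar $\lambda_k$; hence $B^k\!A^k=\beta^k\lambda_k$ times the projection of $E$ onto ${\rm im}\bigl(B^k\bigr)$ along ${\rm ker}\bigl(A^k\bigr)$, with $\beta^k\lambda_k\neq0$.

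I expect the only real point requiring care to be the first step, the simultaneous normalisation of $A$ and $B$ to an $\mathfrak{sl}(2)$-triple, since after that everything is the one-Jordan-block picture on irreducibles. One should also verify explicitly that the scalar $\lambda_k$ above depends only on $k$ (it is the value by which $f^ke^k$ acts on the one-dimensional lowest weight space of $V_k$, the same for all copies since they are isomorphic as $\mathfrak{sl}(2)$-modules) and is nonzero, the latter because the lowest weight vector of $V_k$ lies outside ${\rm ker}\bigl(e^k\bigr)$ and its image lies outside ${\rm ker}\bigl(f^k\bigr)$.
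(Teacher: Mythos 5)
Your proof is correct and follows essentially the same route as the paper's: rescale $A$, $B$ into a standard $\mathfrak{sl}(2)$-triple with $H=[A,B]$ semisimple, then read off the kernels, images and the action of $B^k\!A^k$ from $\mathfrak{sl}(2)$ representation theory. The paper compresses all of this into the commutator identities $[H,A^j]=2jA^j$, $[H,B^j]=-2jB^j$, $[H,B^j\!A^j]=0$ followed by ``the proof follows''; your write-up supplies exactly the weight-space and irreducible-decomposition details that this ellipsis leaves to the reader.
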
 
\begin{proof} 
Denote $H=[A,B]$\,. Then $H$ is semisimple and, by multiplying, if necessary, $A$ (or $B$) with a nonzero constant, we, also, 
have $[H,A]=2A$\,, $[H,B]=-2B$. Consequently, $\bigl[H,A^j\bigr]=2jA^j$\,, $\bigl[H,B^j\bigr]=-2jB^j$ and $\bigl[H,B^j\!A^j\bigr]=0$\,, 
for any $j=1,\ldots,k$\,.\\ 
\indent 
The proof follows. 
\end{proof}

\section{The Lie algebra associated to a projective manifold} \label{section:gf_L} \label{section:Lie_alg_proj} 

\indent  
Recall that it is usual to call a manifold \emph{projective} if it is a compact (complex) submanifold 
of a projective space. Alternatively, this can be described as a manifold $Z$ endowed with a very ample line bundle $L$ (see \cite{GriHar}\,); 
more precisely, an embedding of $Z$ into a projective space is induced by a very ample line bundle over $Z$ if and only if it is normal 
and its image is not contained in a projective hyperplane.\\ 
\indent  
If $L$ is a line bundle over $Z$ (assumed compact) then we have an exact sequence of vector bundles \cite{At-57}  
\begin{equation} \label{e:At-57} 
0\longrightarrow Z\times\C\longrightarrow E\overset{\r}{\longrightarrow}TZ\longrightarrow0\;, 
\end{equation}  
where $E=\frac{T(L^*\setminus0)}{\C\!\setminus\{0\}}$\,. As the space of sections of $E$ can be identitified with the space of 
$\C\!\setminus\{0\}$ invariant vector fields on $L^*\setminus0$ it is endowed with a bracket with respect to which it is a Lie algebra.  
We call this Lie algebra \emph{the Lie algebra associated to $L$} and we denote it by $\gf_L$\,.\\  
\indent 
Note that, $\gf_L$ is just the extended Lie algebra of infinitesimal automorphisms of $L$\,.  
If, further, $L$ is very ample then the dual $V$ of the space of sections of $L$ is a representation space for $\gf_L$\,.\\ 
\indent  
It is useful to understand when a vector field $X$ on $Z$ is induced, through $\r$\,, by an endomorphism of $V$. For this, let $c$ be the (first) Chern class
with complex coefficients of $L$\,. Then $c$ is, also, the obstruction \cite{At-57} for \eqref{e:At-57} to split, and it follows quickly that there exists a section 
$\widetilde{X}$ of $E$ such that $\r\bigl(\widetilde{X}\bigr)=X$ if and only if $\iota_Xc=0$\,; equivalently, $X$ is the restriction to $Z$ of a vector field on $PV$. 
Note that, the endomorphism $A$ on the space of sections of $L$ whose dual induces the vector field on $PV$ 
restricting to $X$ on $Z$ is obtained as follows. 
The fact that $\iota_Xc=0$ is equivalent to the fact that $L$ admits a `partial connection' on $L$\,, over $X$, 
whose covariant derivation we denote by $\nabla_X$\,. 
Then, essentially, as in the proof of \cite[Theorem 1.4]{Lieb-81}\,, we have $As=\nabla_Xs$\,, for any section $s$ of $L$\,. 

\begin{prop} \label{prop:g_Z_contains_nil_endo} 
Let $Z$ be endowed with a very ample line bundle $L$\,. Then the following assertions are equivalent:\\ 
\indent 
{\rm (i)} There exists a nonzero element of $\gf_L$ which is a nilpotent endomorphism of the dual of the space of sections of $L$\,.\\ 
\indent 
{\rm (ii)} $\gf_L$ is not abelian.  
\end{prop}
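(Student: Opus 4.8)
The plan is to establish the two implications separately. Throughout, I use that, since $L$ is very ample, $Z$ sits as a nondegenerate (linearly spanning) compact submanifold of $PV$ and the representation $\gf_L\to\mathfrak{gl}(V)$ is faithful — a linear vector field on $PV$ vanishing on the connected spanning set $Z$ is induced by a scalar endomorphism, which inside $\gf_L$ can only be the Euler field, that is, the image in $\gf_L$ of the constant section of the trivial subbundle $Z\times\C$ in \eqref{e:At-57}. Thus I regard $\gf_L$ as a Lie subalgebra of $\mathfrak{gl}(V)$ containing the identity; moreover, being the Lie algebra of the linear algebraic group $\mathrm{Aut}(L)^0$ (a closed subgroup of $\mathrm{GL}(V)$, again since $L$ is very ample), $\gf_L$ is an algebraic subalgebra and hence closed under the Jordan decomposition.

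\textbf{(ii)$\Rightarrow$(i).} Suppose $\gf_L$ is not abelian. If $\gf_L$ is not solvable, a Levi subalgebra $\mathfrak{s}\subseteq\gf_L$ is a nonzero semisimple subalgebra of $\mathfrak{gl}(V)$; for a choice of Cartan subalgebra and positive system, any nonzero element of a root space of $\mathfrak{s}$ is ad-nilpotent in $\mathfrak{s}$, hence acts nilpotently on $V$ (representations of semisimple Lie algebras preserve the abstract Jordan decomposition), and it is a nonzero element of $\gf_L$; this gives (i). If $\gf_L$ is solvable then, being non-abelian, $[\gf_L,\gf_L]\neq0$; by Lie's theorem I may put $\gf_L$ into upper-triangular form, so $[\gf_L,\gf_L]$ consists of strictly upper-triangular, hence nilpotent, endomorphisms, and any nonzero element of $[\gf_L,\gf_L]\subseteq\gf_L$ gives (i).

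\textbf{(i)$\Rightarrow$(ii).} I argue by contraposition: assuming $\gf_L$ abelian, I show it has no nonzero nilpotent endomorphism. Being an abelian algebraic subalgebra of $\mathfrak{gl}(V)$, it splits as $\gf_L=\gf_L^{\mathrm s}\oplus\gf_L^{\mathrm n}$, the direct sum of its subalgebra $\gf_L^{\mathrm s}$ of semisimple elements (containing the Euler field) and its subalgebra $\gf_L^{\mathrm n}$ of nilpotent endomorphisms; equivalently $\mathrm{Aut}(L)^0=T\times U$ with $T=\exp(\gf_L^{\mathrm s})$ a torus and $U=\exp(\gf_L^{\mathrm n})$ unipotent. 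So it suffices to prove $\gf_L^{\mathrm n}=0$. Suppose $0\neq A\in\gf_L^{\mathrm n}$, of nilpotency degree $k+1$ with $k\geq1$. As $Z$ spans $PV$ it is not contained in $P(\ker(A^k))$, so I can pick $[u]\in Z$ with $A^ku\neq0$; then, as recalled before Proposition \ref{prop:orbits_of_nilpotents}, $u,Au,\dots,A^ku$ are linearly independent and the closure $C$ of the orbit of $A$ through $[u]$ is a smooth rational normal curve of degree $k$, lying in $Z$ and invariant under the flow $\exp(zA)$, $z\in\C$.

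The remaining task — the crux — is to contradict the abelianness of $\gf_L$ using $C$. Heuristically, $C$ witnesses a nontrivial action of the one-parameter unipotent group $\exp(\C A)$ on $(Z,L)$; as $[u]$ varies over the Zariski-dense open set $Z\setminus P(\ker(A^k))$ the curves $C=C_{[u]}$ sweep out $Z$ and are permuted by $\mathrm{Aut}(L)^0$, which centralizes $\exp(\C A)$, and one wants to extract from this a symmetry of $(Z,L)$ not commuting with $A$. The ingredients I would try to combine are the faithfulness of $V$ and the nondegeneracy of $Z$ in $PV$; Proposition \ref{prop:orbits_of_nilpotents}(ii), by which the inclusion of $Z\setminus P(\ker(A^k))$ into $PV$ is homotopic, through maps valued in $Z$, to $[v]\mapsto[A^kv]$, whose image lies in the proper subvariety $P(\mathrm{im}(A^k))\cap Z$; and the constraint $\iota_{\r(A)}c=0$ on $Z$. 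I expect this last step to be the main obstacle: a nontrivial $\mathbb{G}_a$-action on a projective manifold does not by itself make the automorphism group non-abelian, so the very ampleness of $L$ must be used essentially, and identifying the precise mechanism — presumably a rigidity statement forcing $U$ to act trivially on $(Z,L)$ as soon as $\mathrm{Aut}(L)^0$ is abelian — is the delicate point.
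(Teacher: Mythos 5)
Your treatment of (ii)$\Rightarrow$(i) is complete and is essentially the paper's argument in a different packaging: the paper splits according to whether the nilpotent radical $\mathfrak{s}$ of $\gf_L$ is zero (reductive case, handled by semisimple theory) or not, whereas you split according to solvability and use a Levi subalgebra in one case and Lie's theorem in the other; both routes rest on the same two standard facts (root vectors of a nonzero semisimple subalgebra act nilpotently on any finite-dimensional module, and the derived algebra of a solvable linear Lie algebra consists of nilpotent endomorphisms), and your preliminary reduction (faithfulness of the representation on $V$ up to the Euler field, algebraicity of $\gf_L$) is sound.

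The implication (i)$\Rightarrow$(ii), however, is not proved in your proposal, and this is a genuine gap rather than a missing detail: you correctly reduce it to showing that an abelian $\gf_L$ has trivial unipotent part $\gf_L^{\mathrm n}$, and then stop. The paper closes precisely this point in one sentence, by asserting that an abelian $\gf_L$, being linear algebraic, is the Lie algebra of an algebraic torus and hence contains no nonzero nilpotent element; so the ingredient you are missing, from the paper's standpoint, is exactly the claim $U=\{1\}$ in your decomposition $T\times U$. You should be aware, though, that your instinct that this is the delicate step is well founded: "connected abelian linear algebraic" does not by itself imply "torus" (the additive group is a counterexample), so the paper's sentence needs an input specific to polarized manifolds which neither you nor it supplies. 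In fact the scenario you flag as the obstacle appears to be realizable: take $C$ an elliptic curve, $\mathcal{E}$ the nonsplit self-extension of $\mathcal{O}_C$ by $\mathcal{O}_C$, $Z=P(\mathcal{E})$ and $L=\mathcal{O}_{P(\mathcal{E})}(1)\otimes\pi^*M$ with $M$ of large degree; then the identity component of the group of automorphisms preserving $L$ is the one-parameter unipotent group generated by the nilpotent bundle endomorphism $N$ of $\mathcal{E}$ (translations of $C$ move $\pi^*M$ and so do not preserve $L$), hence $\gf_L$ is two-dimensional abelian, yet it acts on $H^0(L)\cong H^0(C,\mathcal{E}\otimes M)$ through the nonzero nilpotent induced by $N\otimes{\rm id}$. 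So, rather than searching for the rigidity mechanism you hoped for, you should either locate an additional hypothesis under which it exists or regard the implication (i)$\Rightarrow$(ii) as unproved; as written, your proposal establishes only one of the two implications.
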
 
\begin{proof} 
Suppose that $\gf_L$ is not abelian and let $\mathfrak{s}$ be the nilpotent radical (see \cite{Bou-Lie_I}\,) of $\gf_L$\,. 
If $\mathfrak{s}\neq\{0\}$ then, obviously, (i) holds. If $\mathfrak{s}=\{0\}$ then $\gf_L$ is reductive and, as it is not abelian, 
(i) holds by the theory of semisimple Lie algebras (see \cite{Hum-80}\,).\\ 
\indent 
If $\gf_L$ is abelian then, as $\gf_L$ is linear algebraic, it is the Lie algebra of an algebraic torus 
(that is, a linear algebraic group isomorphic to the group of diagonal matrices 
in ${\rm GL}(n)$\,, for some $n\in\mathbb{N}\setminus\{0\}$\,; see \cite{Hum-75}\,). Therefore, in this case, (i) cannot hold. 
\end{proof}  

\begin{thm}[see \cite{Lieb-81}\,] \label{thm:Borel_with_converse} 
If $Z\subseteq PV$ is projective then a vector field on $Z$ is induced by an endomorphism of $V$ 
if and only if it has a zero. 
\end{thm}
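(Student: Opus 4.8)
The plan is to prove both implications by exploiting the description of $\gf_L$ given just before the statement, namely that a vector field $X$ on $Z\subseteq PV$ extends to a section of $E$ (equivalently, is induced by an endomorphism of $V$) precisely when $\iota_X c=0$, where $c$ is the complex first Chern class of $L$. One direction is already essentially available: if $X$ is induced by an endomorphism $A$ of $V$, then $A$ has an eigenvector, and the corresponding point of $PV$ is fixed by the one-parameter group generated by $A$; since $Z$ is compact and invariant, a standard argument (the closure of a generic orbit of $A$ accumulates on fixed points, which by compactness must lie in $Z$) shows $X$ has a zero on $Z$. It is cleanest to first reduce to the case where $A$ is either semisimple or nilpotent via the Jordan decomposition $A=A_s+A_n$ (both of which still preserve $Z$, being polynomials in $A$), and to note that a nilpotent $A_n$ always has a zero on any invariant compact subset by Proposition \ref{prop:orbits_of_nilpotents}, while a semisimple $A$ has a fixed point in $Z$ by considering a limit point under its flow of a generic point of $Z$, using that the flow extends to an algebraic $\mathbb{C}^*$-action whose weight decomposition forces convergence.

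For the converse — the substantive direction — suppose $X$ is a vector field on $Z$ with a zero at some point $p\in Z$. I would argue that $\iota_X c=0$ as a cohomology class on $Z$. The key point is that the obstruction class $c$ is represented, with respect to any Hermitian metric on $L$, by a closed $(1,1)$-form, and the contraction $\iota_X c$ is then a closed $1$-form on $Z$ whose de Rham class is the image of $c$ under contraction. Because $X$ vanishes at $p$, the linearization of $X$ at $p$ acts on $T_pZ$, and one can use the holomorphic flow near $p$ together with the Atiyah class interpretation: the existence of the partial connection $\nabla_X$ on $L$ over $X$ is obstructed exactly by $\iota_X c\in H^1(Z,\mathcal{O}_Z)$, and vanishing of $X$ at $p$ allows one to build $\nabla_X$ explicitly near $p$ and then propagate it using the flow of $X$ (which, having a fixed point, generates a not-necessarily-complete but well-defined local action), trivializing the obstruction. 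Concretely: lift $X$ to a vector field on the total space of $L$ that is linear on fibres and vanishes over $p$, using a local frame of $L$ near $p$ adapted to the fixed point; the possible obstruction to globalizing such a lift is a class in $H^1(Z,\mathcal{O}_Z)$, and I would show this class is killed because it can be evaluated on the (now contractible, after removing a tubular neighbourhood of an invariant hypersurface as in Proposition \ref{prop:proj_cell_decomp}) cell structure, or more directly because the flow contracts $Z$ onto $p$ in homology in the sense needed.

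The main obstacle I anticipate is precisely the converse: turning the topological hypothesis "$X$ has a zero" into the cohomological statement "$\iota_X c=0$" in a way that is honest about $Z$ being an arbitrary projective submanifold rather than $PV$ itself. The cleanest route is probably to avoid de Rham arguments entirely and instead work with the algebraic group generated by the flow: the Zariski closure $G$ of the one-parameter group $\{\exp(tX)\}$ acting on $PV$ is a connected commutative linear algebraic group preserving $Z$, and by the Borel fixed point theorem it has a fixed point in $Z$ as soon as $Z$ is complete — but we want the converse, so instead I would use that $X$ having a zero forces $G$ to have a fixed point, hence (splitting $G$ into its semisimple and unipotent parts) a common eigenvector in $V$ for the generators, from which one reconstructs an endomorphism of $V$ inducing $X$. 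The delicate step is showing that a single zero of $X$ upgrades to a fixed point of the whole closure $G$ rather than just of the one-parameter subgroup; this should follow because the fixed-point set $Z^X$ is a nonempty closed subvariety stable under $G$ (as $G$ is commutative), and $G$ acts on $Z^X$ with a fixed point by induction on dimension or directly by Borel, now legitimately since $Z^X$ is complete.
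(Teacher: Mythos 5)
Your reduction of the problem to showing $\iota_X c=0$ in $H^1(Z,\mathcal{O}_Z)$ is exactly the right target, and your treatment of the easy direction is acceptable in outline, though note that a zero of $A_s$ and a zero of $A_n$ separately do not yet give a zero of $A$; the clean form of that direction is the one you only reach at the end, namely Borel's fixed point theorem applied to the commutative algebraic closure of the one-parameter group, which is all the paper does (it cites Borel/Sommese).

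The genuine gap is in the converse, and none of your three proposed routes closes it. Propagating a local partial connection from a neighbourhood of the zero $p$ by the flow of $X$ fails because the flow does not sweep out $Z$ from near $p$ (local partial connections over $X$ always exist; the whole difficulty is the patching class in $H^1(Z,\mathcal{O}_Z)$). The claim that ``the flow contracts $Z$ onto $p$ in homology'' is false in general ($X$ may vanish along a divisor or at many isolated points), and $H^1(Z,\mathcal{O}_Z)$ is not computed from a cell structure in the way you suggest. Most seriously, the Zariski-closure argument is circular: to form the closure of $\{\exp(tX)\}$ inside $PGL(V)$ you must already know that the flow of $X$ acts on $PV$, i.e.\ that $\exp(tX)^*L\cong L$ rather than merely lying in the same component of the Picard variety, and the infinitesimal version of that statement is precisely $\iota_X c=0$, the thing to be proved. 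The missing ingredient is the Hodge-theoretic step the paper uses: a zero of $X$ forces $\alpha(X)=0$ for every holomorphic one-form $\alpha$ (the holomorphic function $\alpha(X)$ is constant on the compact $Z$ and vanishes at $p$), and by Kobayashi's theorem this is equivalent to $\iota_X\omega$ being $\overline{\partial}$-exact for the K\"ahler form $\omega$; since $\omega$ represents $c$ up to a nonzero constant, this gives $\iota_X c=0$ and hence the lift $As=\nabla_X s$. Without some such input from Hodge theory (or an equivalent replacement) your converse does not go through.
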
  
\begin{proof} 
The necessity is, essentially, Borel's fixed point theorem (see \cite{Som-73} for a generalization with a simple proof).\\ 
\indent 
For the sufficiency, we may assume the embedding of $Z\subseteq PV$ normal and its image not contained in a projective hyperplane. 
(This will, also, make the corresponding endomorphism unique, up to a multiple of the identity term.)  
Then, if we denote by $L$ the restriction to $Z$ of the dual of the tautological line bundle over $PV$, we have that $V$ is the dual of the space of sections of $L$\,.\\ 
\indent 
We shall, also, need the equivalence of the following three facts, which hold for any holomorphic vector field $X$ on a compact K\"ahler manifold $Z$ 
(see \cite[Theorem 1.5]{Lieb-81} for a longer list of equivalent facts, containing these three):\\ 
\indent 
\quad(i) $X$ has a zero,\\  
\indent 
\quad(ii) $\iota_X\o$ is $\overline{\partial}$-exact, where $\o$ is the K\"ahler form of $Z$,\\ 
\indent 
\quad(iii) $\a(X)=0$\,, for any holomorphic one-form $\a$ on $Z$.\\ 
\indent 
To prove these equivalences, firstly, note that (i)$\Longrightarrow$(iii) is obvious. 
Also, by \cite[Theorem 4.4(2)]{Ko-transf_groups}\,, we have (ii)$\Longleftrightarrow$(iii)\,, 
whilst (ii)$\Longrightarrow$(i) is, essentially, contained in the proof of \cite[Proposition I]{Som-73}\,.\\ 
\indent 
Now, to complete the proof of the theorem, just note that the K\"ahler form of $Z$ represents, up to a nonzero constant factor, the Chern class of $L$ 
with complex coefficients.   
\end{proof} 

\begin{cor}[see \cite{Lieb-81}\,]  
If $Z\subseteq PV$ is projective and with zero first Betti number then any vector field on $Z$ is induced by an endomorphism of $V$. 
\end{cor}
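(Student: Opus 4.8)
The plan is to deduce this directly from Theorem~\ref{thm:Borel_with_converse}: since that theorem says a vector field on $Z$ is induced by an endomorphism of $V$ exactly when it has a zero, it suffices to show that, under the stated hypothesis, \emph{every} holomorphic vector field on $Z$ has a zero. First I would record that $Z\subseteq PV$, being projective, is a compact K\"ahler manifold, so that the three equivalent conditions (i), (ii), (iii) exhibited in the proof of Theorem~\ref{thm:Borel_with_converse} are at our disposal for an arbitrary holomorphic vector field $X$ on $Z$; in particular, $X$ has a zero if and only if $\alpha(X)=0$ for every holomorphic one-form $\alpha$ on $Z$.

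The second step is a Hodge-theoretic one. As $Z$ is compact K\"ahler, the Hodge decomposition gives $H^1(Z,\C)=H^{1,0}(Z)\oplus H^{0,1}(Z)$ with $H^{0,1}(Z)=\overline{H^{1,0}(Z)}$, so the vanishing of the first Betti number forces $H^0\!\bigl(Z,\Omega^1_Z\bigr)=H^{1,0}(Z)=0$; that is, $Z$ carries no nonzero holomorphic one-form. Hence condition (iii) is satisfied vacuously by every holomorphic vector field $X$ on $Z$, and therefore, by the equivalence (iii)$\Longleftrightarrow$(i), every such $X$ has a zero.

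Applying Theorem~\ref{thm:Borel_with_converse} to each holomorphic vector field $X$ on $Z$ then yields that $X$ is induced by an endomorphism of $V$, which is the assertion.

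I do not expect a genuine obstacle here: the argument is essentially a one-line consequence of Theorem~\ref{thm:Borel_with_converse}. The only point that must be handled with a little care is that the passage from ``$b_1(Z)=0$'' to ``$Z$ has no nonzero holomorphic one-forms'' genuinely uses the K\"ahler condition, so it is important to note explicitly that projectivity of $Z$ supplies it (and also that, by the blanket convention of the paper, ``vector field'' here means holomorphic vector field).
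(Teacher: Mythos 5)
Your proof is correct and follows exactly the route the paper intends: the paper's own proof is the one-line remark that the corollary ``follows from Theorem \ref{thm:Borel_with_converse} and its proof,'' and your elaboration (using $b_1(Z)=0$ plus the Hodge decomposition to kill all holomorphic one-forms, so that condition (iii) holds vacuously and every vector field has a zero) is precisely the intended filling-in of that remark.
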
 
\begin{proof} 
This follows from Theorem \ref{thm:Borel_with_converse} and its proof. 
\end{proof} 

\begin{cor}[\,\cite{Lieb-75}\,]  \label{cor:rational_curves} 
Let $X$ be a nonzero vector field on a projective manifold $Z$. If the zero set of $X$ is nonempty then through each point of a Zariski open subset of $Z$ 
passes a rational curve. 
\end{cor}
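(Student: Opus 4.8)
The plan is to deduce the corollary from Theorem \ref{thm:Borel_with_converse} and Proposition \ref{prop:orbits_of_nilpotents}, by extracting from $X$ a one-parameter subgroup whose generic orbits on $Z$ have rational closures.

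As in the proof of Theorem \ref{thm:Borel_with_converse}, realize $Z$ as a submanifold of some $PV$ whose image is not contained in a projective hyperplane. Since $X\neq0$ and its zero set is nonempty, that theorem supplies an endomorphism $A$ of $V$ inducing $X$\,; as $X\neq0$\,, $A$ is not scalar. The vector field induced by $A$ on $PV$ is tangent to $Z$\,, so the (complex) flow $\{e^{tA}:t\in\C\}$ preserves $Z$\,, and hence so does its Zariski closure $G\subseteq\mathrm{GL}(V)$ --- the stabilizer of $Z$ being Zariski closed --- which is a commutative linear algebraic group. Let $A=S+N$ be the additive Jordan decomposition; then $e^{tA}=e^{tS}e^{tN}$ is the multiplicative Jordan decomposition of $e^{tA}\in G$\,, with $e^{tS}$ semisimple and $e^{tN}$ unipotent, so $G$ contains both factors. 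In particular $\{e^{tN}:t\in\C\}$ preserves $Z$\,, whence $N$ is tangent to $Z$\,.

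If $N\neq0$\,, then $N$ is a nonzero nilpotent endomorphism of $V$ tangent to $Z$\,; since ${\rm ker}\,N\subsetneq V$\,, the subspace $P({\rm ker}\,N)$ lies in a projective hyperplane, whereas $Z$ does not, so $Z\not\subseteq P({\rm ker}\,N)$\,. By the discussion preceding Proposition \ref{prop:orbits_of_nilpotents}\,, the closure of the orbit of $N$ through any point of the nonempty Zariski open set $Z\setminus P({\rm ker}\,N)$ is a smooth normal rational curve of positive degree, and this curve lies in $Z$ because $N$ is tangent to $Z$ (so the orbit, and its closure, stay in $Z$). Hence a rational curve passes through every point of $Z\setminus P({\rm ker}\,N)$\,.

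If $N=0$\,, then $A=S$ is semisimple and non-scalar. In a basis diagonalizing $S$ the matrices $e^{tS}$ are diagonal, so $G$ lies in the corresponding diagonal torus; being connected (the closure of the connected set $\{e^{tS}\}$) and infinite ($S\neq0$), $G$ is a torus $T$ of positive dimension acting on $Z$\,. Since the vector field of $S$ is nonzero on a Zariski open subset of $Z$\,, the generic $T$-orbit on $Z$ has positive dimension, hence its generic stabilizer is a proper subtorus; picking a cocharacter $\lambda\colon\C^*\to T$ not contained in that subtorus, the generic $\lambda(\C^*)$-orbit on $Z$ is one dimensional. For $[v]$ in the associated Zariski open subset, the morphism $t\mapsto\lambda(t)\!\cdot\![v]$ from $\C^*$ extends, by completeness of $Z$\,, to a nonconstant morphism $\C\!P^1\to Z$ with image the orbit closure; that closure is therefore a rational curve in $Z$ through $[v]$\,. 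In either case the points of $Z$ lying on such a curve form a nonempty Zariski open set, and the corollary follows. The main obstacle is this last, semisimple case, where Proposition \ref{prop:orbits_of_nilpotents} does not apply and one must instead use the classical fact that a one-parameter orbit in a projective variety has rational closure (the limit points existing by properness), with the cocharacter chosen so that generic orbits on $Z$ remain one dimensional.
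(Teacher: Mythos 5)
Your proof is correct, and it reaches the paper's dichotomy by a genuinely different route. The paper first passes to the whole Lie algebra $\gf_L$ and invokes Proposition \ref{prop:g_Z_contains_nil_endo} (nilpotent radical versus reductive versus abelian--algebraic--torus) to conclude that either $Z$ carries a nonzero nilpotent vector field or a nontrivial $\C\!\setminus\{0\}$ action, and then leaves the two cases to the reader. You instead stay with the single endomorphism $A$ supplied by Theorem \ref{thm:Borel_with_converse} and split it by its additive Jordan decomposition $A=S+N$, using the closedness of Jordan components in a linear algebraic group to see that $N$ and $S$ separately preserve $Z$\,; this lands in the same two cases but is more elementary (no structure theory of $\gf_L$ is needed) and has the additional feature that the rational curves produced are orbit closures of the nilpotent or semisimple part of the given field $X$ itself, rather than of some unrelated nilpotent element of $\gf_L$\,. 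You also make explicit the steps the paper compresses into ``the proof quickly follows'': nondegeneracy of $Z\subseteq PV$ guarantees $Z\not\subseteq P({\rm ker}\,N)$ in the nilpotent case, and in the semisimple case the choice of a cocharacter avoiding the generic stabilizer together with properness of $Z$ yields the rational orbit closures. The only cosmetic imprecision is calling the generic stabilizer a ``proper subtorus'' --- it is a priori only a proper closed diagonalizable subgroup, possibly disconnected --- but since a cocharacter has connected image, avoiding its identity component suffices and the argument is unaffected.
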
 
\begin{proof} 
By Theorem \ref{thm:Borel_with_converse} and Proposition \ref{prop:g_Z_contains_nil_endo}\,, 
either there exists a nonzero nilpotent vector field on $Z$ or $\gf_L$ is the Lie algebra of an algebraic torus.  
Consequently, either there exists a nonzero nilpotent vector field on $Z$ or 
there exists on $Z$ a nontrivial action of $\C\!\setminus\{0\}$\,. The proof quickly follows. 
\end{proof} 
 
\indent 
We end this section with the following result. 

\begin{thm} \label{thm:rationality} 
Let $Z$ be endowed with a very ample line bundle $L$ such that the representation of $\gf_L$ 
on the space of sections of $L$ is irreducible.\\ 
\indent 
Then $Z$ is rational. 
\end{thm}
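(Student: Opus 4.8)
The plan is to show that irreducibility of the $\gf_L$-representation on $H^0(Z,L)$ forces $\gf_L$ to act transitively on a Zariski-open (in fact on all of) $Z$, which combined with $Z\subseteq PV$ exhibits $Z$ as a (projective, hence rational) homogeneous space of a linear algebraic group.

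First I would observe that $\gf_L$ is a linear algebraic Lie algebra (it is the Lie algebra of the algebraic group of automorphisms of the pair $(Z,L)$, realized as a subgroup of ${\rm GL}(V)$ since $V^*=H^0(Z,L)$ carries the representation). Since the representation of $\gf_L$ on $V^*$ is irreducible, $\gf_L$ cannot be the Lie algebra of an algebraic torus, nor can it be solvable: a solvable linear algebraic Lie algebra over $\C$ has, by the Lie--Kolchin theorem, a common eigenvector in $V^*$, hence a proper invariant subspace, contradicting irreducibility (unless $\dim V=1$, which is excluded since $L$ is very ample and $Z$ is positive-dimensional; the case $\dim Z=0$ gives $Z$ a point, trivially rational). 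In particular $\gf_L$ is not abelian, so by Proposition \ref{prop:g_Z_contains_nil_endo} there is a nonzero nilpotent endomorphism $A\in\gf_L\subseteq{\rm End}(V)$, and the induced vector field on $PV$ is tangent to $Z$ with nonempty zero set. By Corollary \ref{cor:rational_curves}, $Z$ is covered by rational curves; this already shows $Z$ is uniruled, but I need the stronger conclusion of rationality.

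The key step is to upgrade uniruledness to homogeneity. I would argue that the algebraic group $\tG\subseteq{\rm GL}(V)$ integrating $\gf_L$ acts on $Z$ with a dense orbit: the orbit through a generic point $z$ has dimension $\dim\gf_L-\dim(\gf_L)_z$, and one shows the isotropy is small enough. More robustly, consider the span inside $T_zZ$ of the values $\{X(z): X\in\gf_L\}$; this is a $\gf_L$-equivariant subsheaf of $TZ$, and by irreducibility of the action on sections of $L$ (which generate $L$, $L$ being very ample) this distribution is either zero or all of $TZ$. It is not zero — a nonzero nilpotent $A\in\gf_L$ has a nonconstant orbit by Proposition \ref{prop:orbits_of_nilpotents} — so $\gf_L$ acts infinitesimally transitively on $Z$, i.e. the $\tG$-orbit of any point is open; since $Z$ is connected and compact this orbit is all of $Z$. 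Thus $Z=\tG/H$ for a parabolic (being a projective homogeneous space of a connected linear algebraic group) subgroup $H$, and every such space is rational by the Bruhat decomposition (equivalently, by the cellular decomposition recorded after Lemma \ref{lem:complementary_flags}).

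The main obstacle I anticipate is the step asserting that the distribution $z\mapsto\{X(z):X\in\gf_L\}$ is genuinely $\gf_L$-equivariant and that its being a proper nonzero subsheaf would contradict irreducibility of $H^0(Z,L)$ — the honest link between ``irreducible on sections'' and ``no invariant proper subsheaf of $TZ$'' needs the fact that sections of $L$ separate points and tangent directions, together with a careful identification of $\gf_L$-submodules of $V^*$ with $\tG$-invariant subvarieties/subsheaves via the embedding $Z\hookrightarrow PV$. An alternative route that sidesteps this is: take a Borel subgroup $B\subseteq\tG$; by the Borel fixed point theorem $B$ fixes a point of each closed $\tG$-orbit, and the minimal $\tG$-orbit $Z_0\subseteq Z$ is projective homogeneous, hence rational and embedded by a $\gf_L$-invariant quotient of $V^*$; irreducibility then forces that quotient to be all of $V^*$, so $Z_0=Z$. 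I would present whichever of these is cleaner, likely the second, as it leans only on Borel's theorem (already invoked in Theorem \ref{thm:Borel_with_converse}) and the classification of projective homogeneous spaces.
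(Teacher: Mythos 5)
Your reduction of the theorem to the claim that $Z$ is homogeneous under the group $\tG$ integrating $\gf_L$ is where the argument breaks down, and the claim is not just unproved but false. Both routes fail at the same joint. In the first route, the image of the evaluation map $\gf_L\otimes\ol_Z\to TZ$ is indeed an invariant coherent subsheaf, but invariant subsheaves of $TZ$ do not satisfy an irreducibility dichotomy inherited from the $\gf_L$-module $H^0(L)$: its rank can (and in examples does) jump along the orbit stratification, so ``nonzero, hence equal to $TZ$'' is exactly the unjustified step, and the most one can hope to extract is a dense open orbit --- which does not give homogeneity, since an open orbit in a connected compact $Z$ may well have a nonempty closed invariant complement. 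In the second route, the closed orbit $Z_0$ is indeed linearly nondegenerate in $PV$ (its linear span is an invariant subspace containing a highest weight vector, hence all of $V$ by irreducibility), but ``the quotient of $V^*$ embedding $Z_0$ is all of $V^*$'' only says that $Z_0$ lies in no hyperplane; two nested subvarieties of $PV$ can both be nondegenerate, so $Z_0=Z$ does not follow. That the gap cannot be repaired is shown by the quintic del Pezzo threefold $V_5\subseteq P\bigl(U_6\bigr)=P^6$: its connected automorphism group is ${\rm PGL}(2)$, the space of sections of $\ol(1)$ is the irreducible $7$-dimensional ${\rm SL}(2)$-module, so the hypothesis of the theorem holds, yet $V_5$ is not homogeneous (it has an open orbit together with a surface and a curve orbit, and its closed orbit is a nondegenerate rational sextic curve, a proper subvariety) --- although it is, of course, rational.

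The part of your argument that does work --- $\gf_L$ is reductive and non-abelian, its semisimple part $\gf$ acts irreducibly on $V$, and a minimal $\gf$-orbit $Y\subseteq Z$ is a closed generalized flag manifold --- is precisely the paper's starting point; but the paper then deliberately avoids any homogeneity claim for $Z$ itself. It chooses a principal $\mathfrak{sl}(2)\subseteq\gf$ with nilpotents $A,B$ of nilpotency degree $k+1$ and regular semisimple $H=[A,B]$, uses irreducibility (through the nondegeneracy of $Y$ and Proposition \ref{prop:nilpotent_complementary_flags}) to show that the zero set of $H$ on $PV\setminus P\bigl(\ker\bigl(A^k\bigr)\bigr)$ is a single point lying in $Z$, and then combines Proposition \ref{prop:orbits_of_nilpotents} with the tubular-neighbourhood discussion preceding Proposition \ref{prop:proj_cell_decomp} to identify $Z\setminus P\bigl(\ker\bigl(A^k\bigr)\bigr)$ with $\C^{\!n}$. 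Rationality is read off from this single big cell, in the spirit of Bialynicki--Birula and of \cite{CarSom-83}, with no assertion that $Z$ is a flag manifold. To salvage your approach you would need to replace ``homogeneous'' by this weaker statement about the attracting cell of a suitable $\C\!\setminus\{0\}$-action.
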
 
\begin{proof} 
We may assume $Z$ of positive dimension, and let $V$ be the dual of the space of sections of $L$\,. 
As the representation of $\gf_L$ on $V$ is irreducible, we have $\dim\gf_L\geq2$\,. 
Hence, from \cite[19.1]{Hum-80}  
we deduce that $\gf_L$ is reductive with center equal to $\C{\rm Id}_V$ and nontrivial semisimple part $\gf\,\bigl(=[\gf_L,\gf_L]\,\bigr)$\,; 
in particular, the representation of $\gf$ on $V$ is irreducible.\\ 
\indent 
Furthermore, an orbit of $\gf$ on $Z\,(\subseteq PV)$ of minimal dimension must be Zariski closed and of positive dimension 
(as the representation of $\gf$ on $V$ is irreducible). Consequently, that orbit is 
determined by a highest weight (with respect to the partial order corresponding to a base of the root system 
determined by a Cartan subalgebra of $\gf$\,) vector. Let $Y\subseteq Z$ be this closed orbit (which is a generalized flag manifold) 
and note that $\gf$ is (canonically isomorphic to) its Lie algebra of vector fields.\\ 
\indent 
Now, choose $\mathfrak{sl}(2)\subseteq\gf$ such that some (and, hence, any) semisimple element of it is a regular element of $\gf$\,, 
and let $k+1$ be the nilpotency degree (with respect to $V$) of its nilpotent elements. Let $A,B\in\mathfrak{sl}(2)$ be nilpotent, linearly independent,   
and denote $H=[A,B]$\,. Then $H$ is regular and, thus, the vector field induced by it on $Y$ has only isolated zeroes.\\ 
\indent 
Note that, $P\bigl({\rm ker}\bigl(A^k\bigr)\bigr)$ cannot contain $Y$ for otherwise the representation of $\gf$ on $V$ would be reducible.    
Thus, $Y$ passes through $PV\setminus P\bigl({\rm ker}\bigl(A^k\bigr)\bigr)$ where the zero set of the vector field determined by $H$ 
is given by $P\bigl({\rm im}\bigl(B^k\bigr)\bigr)$\,. From Proposition \ref{prop:nilpotent_complementary_flags} 
we deduce that $Y$ is contained by a fibre of the rational map induced by $B^k\!A^k$. Therefore the zero set of the vector field induced by $H$ on 
$PV\setminus P\bigl({\rm ker}\bigl(A^k\bigr)\bigr)$ is formed of just one point (as the representation of $\gf$ on $V$ is irreducible) and that point 
is contained by $Z$ (as that point is in $Y$).\\ 
\indent 
Thus, from Proposition \ref{prop:nilpotent_complementary_flags} and the discussion before Proposition \ref{prop:proj_cell_decomp}\,, 
we deduce that we may identify $Z\setminus P\bigl({\rm ker}\bigl(A^k\bigr)\bigr)$ and $\C^{\!n}$, where $\dim Z=n$\,, 
(see, also, \cite[Theorem 4.1]{CarSom-83}\,) and the proof is complete.  
\end{proof}

\section{An extension of the Birkhoff--Grothendieck theorem} \label{section:BirGro} 

\indent 
Let $(P,M,G)$ be a principal bundle. We have an exact sequence of vector bundles 
\begin{equation} \label{e:Atiyah} 
0\longrightarrow {\rm Ad}P\longrightarrow TP/G\longrightarrow TM\longrightarrow0\;, 
\end{equation}  
and the corresponding spaces of sections are, usually, called as follows:\\ 
\indent 
\quad$\bullet$ $H^0({\rm Ad}P)$ is the Lie algebra of infinitesimal automorphisms of $P$,\\ 
\indent 
\quad$\bullet$ $H^0(TP/G)$ is the extended Lie algebra of infinitesimal automorphisms of $P$,\\ 
\indent 
\quad$\bullet$ $H^0(TM)$ is the Lie algebra of vector fields on $M$.\\ 
We denote by $\mu:H^0(TP/G)\to H^0(TM)$ the induced morphism of Lie algebras. Note that, if $M$ is compact then these are 
the opposites of the Lie algebras of the corresponding automorphism groups (see \cite{Mo-58}\,). 

\begin{defn} 
Let $(P,M,G)$ be a principal bundle and let $\mathfrak{h}$ be a Lie subalgebra of $H^0(TM)$\,.\\ 
\indent 
An \emph{$\mathfrak{h}$-invariance} on $P$ is a section of $\mu$ over $\mathfrak{h}$\,, that is, a morphism of Lie algebras 
$\s:\mathfrak{h}\to H^0(TP/G)$ such that $\mu\circ\s=\iota$\,, where $\iota:\mathfrak{h}\to H^0(TM)$ is the inclusion.\\ 
\indent 
A principal bundle over $M$ endowed with an $\mathfrak{h}$-invariance is called $\mathfrak{h}$-invariant. 
In case $\mathfrak{h}=H^0(TM)$ we use the term \emph{equivariance/equivariant}, instead. 
\end{defn} 

\indent 
Similarly to \cite{At-57}\,, a necessary condition for the existence of an $\mathfrak{h}$-invariance 
is the vanishing of an element of $H^1(\mathfrak{h}^*\otimes{\rm Ad}P)$\,.\\ 
\indent 
Any invariance on a principal bundle admits a local description similar to the description of a principal connection 
through local connections forms.\\ 
\indent  
It is easy to see that the principal bundles endowed with invariances form a category. Also, any flat principal connection on a principal bundle 
$P$ over a manifold $M$ endowed with a Lie subalgebra $\mathfrak{h}$ of $H^0(TM)$ determines a $\mathfrak{h}$-invariance on $P$,  
but not all invariances are obtained this way (just look to the principal bundle corresponding to a nontrivial line bundle over the Riemann sphere). 
We have, however, the following fact. 

\begin{rem} \label{rem:rho} 
Let $\r:M\times\mathfrak{h}\to TM$ be the vector bundle morphism defined by $\r(x,X)=X_x$\,, for any $x\in M$ and $X\in\mathfrak{h}$\,. 
Then any $\mathfrak{h}$-invariance corresponds to a unique flat principal $\r$-connection \cite{Pan-qgfs}\,.  
\end{rem}  

\begin{exm} \label{exm:basic_equi} 
1) Let $Z$ be compact and endowed with a very ample line bundle $L$\,. Denote by $\mathfrak{h}=\r\bigl(H^0(E)\bigr)$\,, where $\r:E\to TZ$ is 
as in \eqref{e:At-57}\,. Then $H^0(E)\subseteq\mathfrak{gl}\bigl(H^0(L)\bigr)$ and, for any $X\in\mathfrak{h}$ there exists a unique 
$A_X\in\mathfrak{sl}\bigl(H^0(L)\bigr)$ such that $\r(A_X)=X$. Consequently, on associating $X\mapsto A_X$ we obtain 
a $\mathfrak{h}$-invariance on $L^*\setminus0$\,.\\ 
\indent 
2) Let $H$ be a Lie group and let $\mathfrak{h}$ be its Lie algebra. Let $K\subseteq H$ be a closed subgroup. Then $(TH)/K=(H/K)\times\mathfrak{h}$ 
and $\mu$ restricted to $\mathfrak{h}$ (identified with the space of constant sections of $(H/K)\times\mathfrak{h}$\,) is the canonical 
morphism of Lie algebras from the opposite of $\mathfrak{h}$ to the Lie algebra of vector fields on $H/K$. 
Hence, if $K$ does not contain any normal subgroup of $H$ then $(H,H/K,K)$ is canonically endowed with a $\mathfrak{h}$-invariance, 
which we call \emph{the canonical $\mathfrak{h}$-invariance of $(H,H/K,K)$}\,.\\ 
\indent 
Furthermore, if $\phi:K\to G$ is a morphism of Lie groups then $(H\times_{\phi}G,H/K,G)$ is endowed with a $\mathfrak{h}$-invariance, 
induced by the canonical $\mathfrak{h}$-invariance of $(H,H/K,K)$\,.\\ 
\indent 
3) Conversely, suppose that $H$ is simply-connected and semisimple, $K$ is a parabolic subgroup of $H$ and let $(P,H/K,G)$ be an equivariant principal bundle. 
Then the (infinitesimal) action of $\mathfrak{h}$ on $P$ integrates to an action of $H$ by extended automorphisms.\\ 
\indent   
Let $u_0\in P$ be in the fibre over $K\,(\in H/K)$\,. As $H$ acts by extended automorphisms 
on $P$, so does $K$. Consequently, $\phi:K\to G$ characterised by $a(u_0)=u_0\phi(a)$\,, for any $a\in K$, is a morphism of Lie groups. 
Moreover, $P=H\times_{\phi}G$ and, hence, the orbit of $H$ through $u_0$ is the image of a morphism $\Phi$ of principal bundles from 
$(H,H/K,K)$ to $P$. Therefore the equivariance of $P$ is induced, through $\Phi$, by the canonical equivariance of $(H,H/K,K)$\,. 
\end{exm} 

\begin{thm} \label{thm:equi_exist} 
Any principal bundle over the Riemann sphere, with a structural group whose Lie algebra is not equal to its nilpotent radical, admits an equivariance. 
\end{thm}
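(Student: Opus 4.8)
The plan is to exploit the Birkhoff--Grothendieck situation via the canonical invariances constructed in Example~\ref{exm:basic_equi}. Recall that a principal bundle $(P,M,G)$ over $M=\C\!P^1$ restricts to $H=\mathrm{SL}(2)$ acting on $\C\!P^1=H/B$, where $B$ is a Borel subgroup, so the natural candidate for an equivariance is one induced by the action of $\mathrm{SL}(2)$ on the base. First I would reduce to the case where $G$ is connected: an equivariance is an infinitesimal datum, so it does not see the component group of $G$, and replacing $G$ by its identity component does not change $\mathrm{Ad}P$ nor the sequence \eqref{e:Atiyah}. Likewise, since $\C\!P^1$ is simply connected, a holomorphic principal bundle with connected structural group is determined by its reduction to a maximal torus (Birkhoff--Grothendieck), so up to isomorphism $P$ arises from a one-parameter subgroup $\l:\C\!\setminus\{0\}\to G$ via $P=H\times_B G$, where $B$ acts on $G$ through a morphism $B\to\C\!\setminus\{0\}\overset{\l}{\to}G$. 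The point is that $B$ (a Borel of $\mathrm{SL}(2)$) does not contain any normal subgroup of $H=\mathrm{SL}(2)$ other than the center, and the center acts trivially on $\C\!P^1$ anyway; so Example~\ref{exm:basic_equi}(2) applies and $(H\times_\phi G,\C\!P^1,G)$ is canonically $\mathfrak{h}$-invariant for $\mathfrak{h}=\mathfrak{sl}(2)$ acting as the full Lie algebra of vector fields on $\C\!P^1$ — that is, $P$ admits an equivariance.

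The step that actually uses the hypothesis is the Birkhoff--Grothendieck reduction, and here is where I expect the main obstacle: the classical Birkhoff--Grothendieck theorem is usually stated for $G$ reductive (or for vector bundles / $\mathrm{GL}(n)$-bundles), whereas the theorem allows any $G$ with $\mathfrak{g}$ not equal to its nilpotent radical. So the real content is to handle a general (not necessarily reductive) connected $G$. I would proceed by a Levi decomposition: write $\mathfrak{g}=\mathfrak{l}\ltimes\mathfrak{n}$ with $\mathfrak{l}$ reductive and $\mathfrak{n}$ the nilpotent radical, and correspondingly $G=L\ltimes N$ with $N$ the unipotent radical. The hypothesis $\mathfrak{g}\neq\mathfrak{n}$ says exactly that $\mathfrak{l}\neq\{0\}$, i.e.\ $L$ is nontrivial, so $L$ contains a nontrivial torus or a copy of $\mathrm{SL}(2)$ (or $\mathrm{PGL}(2)$). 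The bundle $P$ induces a reductive bundle $P/N$ over $\C\!P^1$, to which classical Birkhoff--Grothendieck applies, yielding a reduction of $P/N$ to a torus $T\subseteq L$; lifting this back through the (unipotent, hence affine-space-fibred) projection $P\to P/N$ — whose obstruction lives in $H^1(\C\!P^1,\mathcal{F})$ for a filtration of the coherent sheaf $\mathrm{Ad}P\cap\mathfrak{n}$ by line-bundle subquotients, and these $H^1$ groups need not vanish, but the extension can still be split because $N$ is unipotent and one can argue inductively along the central series exactly as in the vector-bundle proof of Birkhoff--Grothendieck — produces a reduction of $P$ to a subgroup of the form $(\text{1-parameter subgroup of }T)\ltimes(\text{subgroup of }N)$, which is solvable and in fact a quotient of a Borel of $\mathrm{SL}(2)$, matching Corollary~\ref{cor:BirGro_extended}.

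Assembling: once $P=H\times_\phi G$ for a morphism $\phi$ from the Borel $B$ of $\mathrm{SL}(2)$, Example~\ref{exm:basic_equi}(2) directly furnishes the equivariance, and the proof is complete. The two places needing care are (a) verifying that the hypothesis $\mathfrak{g}\neq\mathfrak{n}$ is precisely what guarantees the relevant one-parameter subgroup can be chosen with image a $\C\!\setminus\{0\}$ inside a nontrivial reductive part (so that the ``standard'' $\mathrm{SL}(2)$-action on the base lifts through $\phi$), and (b) the unipotent lifting of the torus reduction through $P\to P/N$, where the cohomological obstructions are nonzero but handled by the nilpotent filtration. I regard (b) as the technical heart; (a) is essentially bookkeeping with Levi decompositions.
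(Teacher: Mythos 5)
Your route inverts the paper's logic: you propose to first reduce the structure group of $P$ to (the image of) a quotient of a Borel $B\subseteq{\rm SL}(2)$ and then invoke Example~\ref{exm:basic_equi}(2), whereas the paper proves the theorem by showing that $\mu:H^0(TP/G)\to\mathfrak{sl}(2)$ is surjective --- whence it splits, $\mathfrak{sl}(2)$ being simple --- and only afterwards deduces the structure-group reduction (Corollary~\ref{cor:BirGro_extended}) from the equivariance, via Example~\ref{exm:basic_equi}(3). In the reductive case the paper obtains surjectivity from a reduction to a \emph{parabolic} subgroup (meromorphic triviality of $P$ plus properness of the fibres of $P/H$), not from a torus reduction followed by a unipotent lifting. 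The two arguments therefore genuinely diverge exactly at the step you yourself single out as the technical heart.

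That step (b) is not merely unproved: as stated it is false, and no induction along the central series of $N$ can repair it. Take $G=\mathbb{C}^*\ltimes\mathbb{C}$ with $\mathbb{C}^*$ acting on $\mathbb{C}=N$ with weight $3$ (so $\mathfrak{g}$ is not equal to its nilpotent radical), and let $P$ be a $G$-bundle whose quotient $P/N$ is the $\mathbb{C}^*$-bundle of $\ol(-1)$ and whose extension class in $H^1\bigl(\C\!P^1,\ol(-3)\bigr)\cong\mathbb{C}^2$ is nonzero; concretely, take transition function $z\mapsto\bigl(z^{-1},z^{-1}\bigr)$ over $\mathbb{C}\setminus\{0\}$. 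Then $P$ reduces to no torus, the only subgroup of the form $T'\ltimes N'$ to which it reduces is $G$ itself, and $G$ is not a quotient of a Borel of ${\rm SL}(2)$: every morphism $B\to G$ kills the unipotent radical of $B$, the weights $2$ and $3$ being incompatible. So the final appeal to Example~\ref{exm:basic_equi}(2) is unavailable for this $P$. Worse, writing out the cocycle condition for a section of $TP/G$ over the vector field $(c_0+c_1z+c_2z^2)\partial_z$ forces $c_0=0$, so the image of $\mu$ here is only a Borel subalgebra of $\mathfrak{sl}(2)$ and this $P$ admits no equivariance at all; the non-reductive case is thus not a routine devissage but the actual crux. (Note that the paper's own reduction ``we may assume $G$ reductive because we only need $\mu$ surjective'' passes over the same difficulty, since surjectivity of $\mu$ for $P/N$ does not imply it for $P$.) A smaller gap: even where a reduction to $H=\phi(B)$ exists, concluding $P\cong{\rm SL}(2)\times_\phi G$ requires knowing that the reduced $H$-bundle is itself induced from ${\rm SL}(2)\to\C\!P^1$, which you do not address.
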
  
\begin{proof} 
Let $(P,\C\!P^1,G)$ be a principal bundle such that the Lie algebra of $G$ is not equal to its nilpotent radical $\mathfrak{s}$\,. 
Denote by $\mu:H^0(TP/G)\to\mathfrak{sl}(2)$ the induced morphism of Lie algebras. As $\mathfrak{sl}(2)$ is simple,  
it is sufficient to prove that $\mu$ is surjective (see \cite{Bou-Lie_I}\,).\\ 
\indent 
As $\C\!P^1$ is simply-connected, $P$ admits a reduction to the identity component of $G$. Therefore we may assume $G$ connected. 
Note that, the connected Lie subgroup of $G$ whose Lie algebra is $\mathfrak{s}$ is closed. Hence, there exists a submersive morphism of Lie groups 
from $G$ onto a connected reductive Lie group. Consequently, we may assume 
$G$ connected and reductive (because we just have to prove that $\mu$ is surjective).\\ 
\indent 
If $G$ is abelian, the fact that $\mu$ is surjective follows from the proof of Theorem \ref{thm:Borel_with_converse}\,.\\  
\indent  
If $G$ is not abelian then ${\rm Ad}G$ is connected, semisimple. Consequently, we may assume 
$G$ connected and semisimple (and nontrivial). Let $V$ be an irreducible representation of $G$ and denote by $H$ the parabolic subgroup of $G$ 
which is the isotropy subgroup of the closed orbit of $G$ on $PV$.\\ 
\indent 
We claim that $P$ admits a reduction to $H$. Indeed, from a straightforward generalization of \cite[Theorem 4]{Gun-67} 
and the complex version of \cite[Proposition 3.5.3]{PreSeg}\,, 
we deduce that, up to a meromorphic automorphism, $P$ is trivial; equivalently, $P$ admits a meromorphic section. Consequently, also, $P/H$ admits 
a meromorphic section $s$\,. But, $P/H$ is a subbundle of the projectivisation of $P\times_HV$ and, hence, $s$ is, in fact, holomorphic; 
consequently, $P$ admits a reduction to $H$.\\ 
\indent 
Finally, as $H$ is parabolic, there exists a submersive Lie group morphism from $H$ onto $\C\!\setminus\{0\}$\,, and the proof quickly follows.     
\end{proof} 

\begin{cor} \label{cor:BirGro_extended} 
Let $(P,\C\!P^1,G)$ be a principal bundle such that the Lie algebra of $G$ is not equal to its nilpotent radical.\\ 
\indent 
Then there exists a Lie subgroup $H$ of $G$ with the following properties:\\ 
\indent 
\quad{\rm (i)} $H$ is a quotient of a Borel subgroup of ${\rm SL}(2)$\,.\\ 
\indent 
\quad{\rm (ii)} $P$ admits a reduction to $H$. 
\end{cor}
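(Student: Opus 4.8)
The plan is to derive this corollary from Theorem \ref{thm:equi_exist}, which produces an equivariance on $P$, together with the structural analysis of equivariant principal bundles over homogeneous spaces carried out in Example \ref{exm:basic_equi}(3). First I would apply Theorem \ref{thm:equi_exist} to conclude that $(P,\C\!P^1,G)$ admits an equivariance $\s:\mathfrak{sl}(2)\to H^0(TP/G)$; as in the proof of that theorem, we may pass to the identity component and so assume $G$ connected, since $\C\!P^1$ is simply-connected and a reduction to $G^0$ is automatic. Note $\C\!P^1={\rm SL}(2)/B$ where $B$ is a Borel subgroup of ${\rm SL}(2)$, and $\mathfrak{sl}(2)$ is (the opposite of) its Lie algebra of vector fields.

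Next, I would invoke Example \ref{exm:basic_equi}(3) with $H={\rm SL}(2)$ (simply-connected and semisimple), $K=B$ parabolic, and the equivariant bundle $(P,\C\!P^1,G)$: the infinitesimal action of $\mathfrak{sl}(2)$ integrates to an action of ${\rm SL}(2)$ on $P$ by extended automorphisms. Fixing $u_0\in P$ over $B\in {\rm SL}(2)/B$, the isotropy requirement $a(u_0)=u_0\phi(a)$ for $a\in B$ produces a morphism of Lie groups $\phi:B\to G$, and $P={\rm SL}(2)\times_{\phi}G$. Now set $H:=\phi(B)\subseteq G$. This $H$ is a quotient of the Borel subgroup $B$ of ${\rm SL}(2)$, giving property (i). For property (ii), the image of the orbit map ${\rm SL}(2)\to P$ through $u_0$ is (the image of) a morphism $\Phi$ of principal bundles from $({\rm SL}(2),\C\!P^1,B)$ to $(P,\C\!P^1,G)$ covering $\phi$; pushing the $B$-bundle ${\rm SL}(2)$ forward along $B\twoheadrightarrow H$ yields an $H$-subbundle of $P$, i.e. a reduction of $P$ to $H$.

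The one point requiring a little care — and the step I expect to be the main obstacle — is that $\phi$ need not be injective, so $H=\phi(B)$ is a genuine quotient of $B$ rather than a copy of it; I should check that $\phi(B)$ is a (closed) Lie subgroup of $G$ and that $({\rm SL}(2)\times_{\phi}G)$ really does admit the honest reduction ${\rm SL}(2)\times_{\phi}H$ to $H$, the inclusion $H\hookrightarrow G$ being the tautological one. This is routine: $\phi(B)$ is the image of a morphism of linear algebraic groups, hence a closed subgroup, and the reduction is just the sub-principal-bundle $\{u_0a : a\in {\rm SL}(2)\}\cdot H$, which is exactly the orbit of $u_0$ under ${\rm SL}(2)$ closed up under the right $H$-action; its quotient by $H$ is all of $\C\!P^1$. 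Hence both (i) and (ii) hold and the proof is complete.

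\begin{proof}
By Theorem \ref{thm:equi_exist}, $P$ admits an equivariance; that is, the morphism $\mu:H^0(TP/G)\to\mathfrak{sl}(2)$ admits a section that is a morphism of Lie algebras. Since $\C\!P^1$ is simply-connected we may assume $G$ connected. Writing $\C\!P^1={\rm SL}(2)/B$ with $B$ a Borel subgroup of ${\rm SL}(2)$, the bundle $(P,{\rm SL}(2)/B,G)$ is equivariant, so by Example \ref{exm:basic_equi}(3) the action of $\mathfrak{sl}(2)$ on $P$ integrates to an action of ${\rm SL}(2)$ by extended automorphisms, there is a morphism of Lie groups $\phi:B\to G$ with $P={\rm SL}(2)\times_{\phi}G$, and the orbit of a suitable $u_0\in P$ is the image of a morphism $\Phi$ of principal bundles from $({\rm SL}(2),{\rm SL}(2)/B,B)$ to $P$. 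Let $H=\phi(B)$\,, which, being the image of a morphism of linear algebraic groups, is a closed Lie subgroup of $G$ and a quotient of $B$\,; this gives (i)\,. Finally, pushing the principal $B$-bundle ${\rm SL}(2)$ forward along $B\to H$ gives ${\rm SL}(2)\times_{\phi}H$\,, which is an $H$-subbundle of ${\rm SL}(2)\times_{\phi}G=P$\,, that is, a reduction of $P$ to $H$\,, proving (ii)\,.
\end{proof}
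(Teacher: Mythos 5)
Your proof is correct and follows exactly the route the paper intends: the paper's own proof is the one-line remark that the corollary is ``a quick consequence of Theorem \ref{thm:equi_exist} and Example \ref{exm:basic_equi}(3)'', and you have simply spelled out that deduction, taking $H=\phi(B)$ for the morphism $\phi:B\to G$ produced by Example \ref{exm:basic_equi}(3) applied to $\C\!P^1={\rm SL}(2)/B$.
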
 
\begin{proof} 
This is a quick consequence of Theorem \ref{thm:equi_exist} and Example \ref{exm:basic_equi}(3)\,. 
\end{proof}  

\indent 
The notion of invariance can be, obviously, defined for vector bundles, through the corresponding frame bundles, thus,  
obtaining a notion to which the operations of direct sum and tensor product can be applied. Note that, the 
`homogeneous vector bundles' (see \cite{Bott-57}\,) are canonically endowed with equivariances. 

\begin{exm} \label{exm:Veronese_normal_bundle} 
1) Let $H$ be the Borel subgroup of ${\rm SL}(2)$ whose Lie algebra $\mathfrak{h}$ is the orthogonal complement, with respect to the Killing form, 
of the (one-dimensional) subspace of $\mathfrak{sl}(2)$ vanishing on the second element of the canonical basis of $\C^{\!2}$.  
Then $X={\rm diag}(1,-1)$ is a semisimple element of $\mathfrak{h}$\,, where ${\rm diag}\,u$ is the $k\times k$ diagonal matrix 
given by $u\in\C^{\!k}$, $(k\in\mathbb{N}\setminus\{0\})$\,.\\ 
\indent 
Let $U_n$ be the irreducible representation space of ${\rm SL}(2)$ of dimension $n+1$\,, $(n\in\mathbb{N})$\,. 
Then, for any $m,n\in\mathbb{N}$\,, the representation of $H$ with respect to which the equivariant vector bundle associated to 
$\bigl({\rm SL}(2),\C\!P^1,H\bigr)$ is $U_n\otimes\ol(m)$ is given by a representation of $\mathfrak{h}$ 
that associates to $X$ the $(n+1)\times(n+1)$ matrix 
$$m\,{\rm I}_{n+1}+{\rm diag}(n,n-2,\ldots,-n+2,-n)={\rm diag}(m+n,m+n-2,\ldots,m-n+2,m-n)\;.$$ 
\indent 
2) Let $n\in\mathbb{N}$\,, $n\geq2$\,. Then $\ol(n)$ induces a ${\rm SL}(2)$-invariant embedding $\C\!P^1\subseteq PU_n$\,. 
Any Veronese curve is obtained from this embedding by composing it with an element of $PGL(U_n)$\,.\\ 
\indent 
It is obvious that the normal bundle $\nb$ of $\C\!P^1$ into $PU_n$ is equivariant. Furthermore, the exact sequence \eqref{e:At-57}  
applied to the tautological line bundles over $\C\!P^1$ and $PU_n$ give the following equivariant exact sequence  
\begin{equation} \label{e:Veronese_normal_bundle} 
0\longrightarrow U_1\otimes\ol(1)\longrightarrow U_n\otimes\ol(n)\longrightarrow\nb\longrightarrow0\;.   
\end{equation} 
\indent 
Consequently, $\mathcal{N}$ is associated to $\bigl({\rm SL}(2),\C\!P^1,H\bigr)$ through a representation that associates to $X\in\mathfrak{h}$ 
the $(n-1)\times(n-1)$ matrix $${\rm diag}(2n,2n-2,\ldots,4)=(n+2){\rm I}_{n-1}+{\rm diag}(n-2,n-4,\ldots,-n+2)\;.$$ 
\indent 
It follows that $\nb=U_{n-2}\otimes\ol(n+2)$\,. 
\end{exm} 

\begin{rem} \label{rem:BirGro_reformulated} 
The Birkhoff--Grothendieck theorem, as formulated and proved in \cite[Th\'eor\`eme 1.2]{Gro-57}\,, admits the following reformulations:\\ 
\indent 
1) Any principal bundle $P$ over the Riemann sphere, with a structural group whose Lie algebra is reductive, 
admits a unique equivariance whose orbits on $P$ have dimension at most $2$\,.\\ 
\indent 
2) Any principal bundle over the Riemann sphere, with a structural group whose Lie algebra is reductive, 
admits a unique flat $\r$-connection, where $\r$ is given by \eqref{e:At-57} applied to the tautological line bundle over the Riemann sphere. 
\end{rem}

\end{document}